\documentclass[12pt,oneside,reqno]{amsart}
\usepackage{amsmath}
\usepackage{orcidlink}
\usepackage{hyperref}
\usepackage{amsmath}
\usepackage{xcolor}
\usepackage{amssymb}

\makeatletter
\@namedef{subjclassname@2020}{%
  \textup{2020} Mathematics Subject Classification}
\makeatother

\newtheorem{theorem}{Theorem}[section]
\newtheorem{definition}{Definition}[section]
\newtheorem{proposition}{Proposition}[section]
\newtheorem{corollary}{Corollary}[section]
\newtheorem{lemma}{Lemma}[section]

\newtheorem{remark}{Remark}[section]
\numberwithin{equation}{section}

\newcommand{\Z}{{\mathbb Z}}

\everymath{\displaystyle}

\begin{document}

\title[generalised Lehmer numbers]{On existence of generalised Lehmer numbers modulo primes}

\author[B. Misra, B. Roy and B. Sury]{Bikram Misra \orcidlink{0009-0000-5863-2789}, Bidisha Roy and B. Sury}

\address{Bikram Misra and Bidisha Roy\ \newline Department of Mathematics and Statistics, Indian Institute of Technology Tirupati, Yerpedu 517619, India}
\email{bikram.misra@iittp.ac.in,bidisha.roy@iittp.ac.in}
\address{B. Sury\ \newline Indian Statistical Institute, 8th Mile Mysore Rd, Bengaluru 560059, India\newline \& \newline International Centre for Theoretical Sciences, Survey No. 151, Shivakote Village, Hesaragatta Hobli, Bengaluru 560089, India
}
\email{surybang@gmail.com}

\subjclass[2020]{11A07}

\keywords{Lehmer numbers, Orthogonality relations}

\maketitle
\begin{abstract}
    In this article, we discuss a multivariable analogue of Lehmer's parity problem. We generalise the notions of Lehmer numbers as Lehmer $q$-tuples for a fixed odd prime $q$. We count the number of Lehmer $q$-tuples and derive an asymptotic estimate with explicit error terms. This deals with estimating various character sums and Deligne's bound for incomplete exponential sums.  
\end{abstract}

\section{introduction}
Throughout this article, let $p$ denote an odd prime and $\Bar{a}$ denote the multiplicative inverse of a natural number $a< p$ in the group $(\Z/p\Z)^{\times}:=\{1,\ldots,p-1\}$. In 1981, Lehmer asked the question that `How often a number and its inverse have opposite parity?' (see \cite[\textsection F12]{G}). From a probabilistic point of view, one may expect that the question has an affirmative answer approximately half the times. In the context of this question, we begin by defining the Lehmer numbers modulo $p$. 

An integer $a$ in $(\mathbb{Z}/p\mathbb{Z})^{\times}$ is said to be a Lehmer number modulo $p$ if $a$ and $\Bar{a} \pmod p$, within the same range, have opposite parity. 
In other words, we say $a$ to be a Lehmer number modulo $p$ if $a+\Bar{a}$ is not divisible by $2$.
It is easy to see that there are no Lehmer number modulo $3$ and $7$.

For a randomly chosen residue class $a \pmod p$, the parity of $\Bar{a}$ is independent of the parity of $a$. Hence, it is natural to expect that exactly half of the elements in $(\mathbb{Z}/p\mathbb{Z})^{\times}$ are Lehmer numbers modulo $p$. In fact, if $M_p$ denotes the number of Lehmer numbers modulo $p$, then Zhang \cite{WZ} showed that the sum defining $M_p-({p-1})/{2}$ exhibits square-root cancellations. To be precise, for a sufficiently large prime $p$, he showed that 
\begin{equation}\label{M_p}
{M}_p= \frac{p-1}{2}+O(p^{\frac{1}{2}}\log^2p). 
\end{equation}
Cohen and Trudian \cite{CoTr19} proved the following inequality and made the constant explicit in \eqref{M_p}.
$$\Bigl\lvert M_p-\frac{p-1}{2}\Bigr\rvert<T_p^2\sqrt{p}(\log p)^2\le \frac{1}{2}\sqrt{p}(\log p)^2,$$ where
$$T_m := \frac{2\sum_{j=1}^{(m-1)/2}\tan\bigl(\frac{\pi j}{m}\bigr)}{m\log m} \text{ for }m\ge  3 \mbox{ and while } m\to \infty, T_m\to \frac{2}{\pi}.$$ 
% \begin{equation}
% \left|{M}_p-\frac{p-1}{2}\right|< \frac{4}{\pi^2}p^{\frac{1}{2}}\left(\log p+1.549 \right)^2.
% \end{equation}

\medskip

As understanding the distribution of primitive roots is an active topic of research in number theory, it is worthwhile to study Lehmer numbers that are (resp. are not) primitive roots; for simplicity, we will use the notations LPR  and LNPR, respectively.
We expect that there are $\varphi(p-1)/2$ LPRs and $[p-1-\varphi(p-1)]/2$ LNPRs; similar to above, there are explicit estimates for the difference between the true number and the expected number. Here $\varphi$ denotes Euler's totient function.

For instance, in~\cite{CoTr19}, Cohen and Trudgian showed that the true number of LPRs differs from what is expected by less than $2^{\omega(p-1)} T_p^2 \theta_{p-1} \sqrt{p}(\log p)^2$, where $\theta_n=\varphi(n)/n$ and ${\omega(n)}$ counts the number of distinct prime divisors of a natural number $n$. The second author
\cite{R} gave an analogous result for LNPRs; in this case, the difference is less than $2^{\omega(p-1)+1}T_p^2\theta_{p-1}\sqrt{p}(\log p)^2$.

Wang and Xu~\cite{WaXu25} also studied the problem of whether a given integer can be written as the sum of two Lehmer numbers. %Their estimates imply that for sufficiently large odd integers $q$, any integer can be written as the sum of two Lehmer numbers in $\Z/q\Z$.
Their estimates imply that any integer can be written as the sum of two Lehmer numbers in $\Z/q\Z$ for sufficiently large odd integers $q$.
Cohen and Trudgian~\cite{CoTr19} proved results on the following analogue of this problem: are there Lehmer primitive roots $a$ and $b$ whose sum is $1\pmod{p}$? If $p$ is sufficiently large, the answer is yes.

% The concepts of Lehmer numbers are very important in understanding roots distributions modulo primes. There have been numerous works on Lehmer numbers and related properties. In particular, Cohen and Trudgian \cite{CoTr19} proved the existence of the Lehmer primitive roots modulo a prime $p \neq 3,7$. In \cite{R}, the second author studied the asymptotic estimates of the Lehmer numbers which are also non-primitive root modulo prime $p\neq 3,7$. Further, she showed the existence of consecutive Lehmer primitive roots modulo prime $p$.

In a related direction, Zhang \cite{WZ-3} introduced Lehmer numbers modulo an odd positive integer $s>2$. Denoting $r(s)$ as the number of $x$ such that $x\ \Bar{x}\equiv1\bmod{s}$ with $1\le x,\Bar{x}<s$, and $x,\Bar{x}$ are of opposite parity, Zhang first conjectured an estimate for $r(s)$ and later  in \cite{WZ}, he established his conjecture 
% $$
% r(s)=\varphi(s) / 2 +O(s^{\frac{1}{2}+\epsilon}).
% $$ 
% Zhang himself proved this conjecture in \cite{WZ} 
by showing that 
\begin{equation}\label{Zhang-odd}
r(s)=\varphi(s) / 2 +O(s^{\frac{1}{2}}\ d(s)\ \log^2s),    
\end{equation}
where $d(n)$ counts the number of positive divisors of a natural number $n$. Clearly, one recovers \eqref{M_p} from \eqref{Zhang-odd} by choosing $s$ to be a prime. Lehmer numbers have been studied in various directions in the literature; see, for instance, \cite{B,LY,LY-2,WLY,YZ}.

% We also record here several other generalizations of Lehmer numbers in \cite{B,LY,LY-2,WLY,YZ}.

\medskip

In this article, we further generalise the notion of Lehmer numbers from a different perspective, namely Lehmer $q$-tuples for a fixed odd prime $q$, defined as follows. 
\begin{definition}
A tuple 
$
\mathbf a=(a_1,\ldots,a_{q-1})
\in\left((\Z/p\Z)^{\times}\right)^{q-1}
$
is called a generalised Lehmer $q$-tuple modulo $p$ if $$
a_1+\cdots+a_{q-1}+\overline{a_1a_2\cdots a_{q-1}}
\not\equiv0\pmod q.
$$
\end{definition}
The aim of this article is to derive an asymptotic formula with an explicit constant for the number of generalised Lehmer $q$-tuples modulo $p$ for primes $p>q$. We also give special attention to the case $q=3$. 

% the total number of Assuming that this is . As before, from the probabilistic point of view one might expect that there are 

\subsection{Notations}
Let $\zeta_q=e^{2\pi i/q}$ denote a primitive $q$-th root of unity and $\psi$ denote the additive character modulo $p$. For an integer $a$, we define $\psi(a) =e^{2\pi i a/p}$. Let $h\in\{1,\ldots,q-1\}$ be determined by $p\equiv h\pmod q$. For
$1\le m,h\le q-1$, we introduce the following notations:
\begin{equation*}\label{cqm}
c_{q,m}(h):=\sum_{r=1}^{h-1}\zeta_q^{mr},
\qquad
\mathcal C_q(h):=\sum_{m=1}^{q-1}c_{q,m}(h)^q.
\end{equation*}
The empty sum is understood to be zero.
We note in passing the following. If $tr$ denotes trace of $\mathbb{Q}(\zeta_q)$ over $\mathbb{Q}$, then we have:
$$\mathcal C_q(h) = tr \bigg(\frac{1- \zeta_q^{h-1}}{1 - \zeta^q} \bigg)^q.$$
In particular,
$$\mathcal C_q(2)=q-1,~ \mathcal C_q(h) = - \mathcal C_q(q+2-h)~\forall~
2 \le h \le q-1.$$

Next, we introduce the following notations:
$$
S_{q,m}(j):=
\sum_{r\in(\Z/p\Z)^{\times}}\zeta_q^{mr}\psi(-jr),
\qquad
T_{q,m}:=\sum_{j=1}^{p-1}S_{q,m}(j),
\qquad
A_{q,m}:=\sum_{j=1}^{p-1}|S_{q,m}(j)|.
$$
For a tuple $\mathbf a
\in\left((\Z/p\Z)^{\times}\right)^{q-1}$, we also define
$$
F_q(\mathbf a)
:=a_1+\cdots+a_{q-1}+\overline{a_1a_2\cdots a_{q-1}}.
$$
Finally we let $\mathcal M_{p,q}$ denote the number of generalised Lehmer $q$-tuples modulo $p$.

With these notations in place, we now state the main theorem of this article.
Note that, the probability that a natural number is not divisible by $q$ is $(q-1)/q$. Therefore, it is heuristically excepted that the asymptotic behaviour of $\mathcal M_{p,q}$ is closely related to $(q-1)(p-1)^{q-1}/q$. To be precise, we prove the following.

\begin{theorem}\label{thm:main thm q}
For odd primes $q>2$ and $p>q$, with $p\equiv h\pmod q$, we have
\begin{align*}
&\left|
\mathcal M_{p,q}-\frac{q-1}{q}(p-1)^{q-1}
+\frac{1+p+\cdots+p^{q-1}}{qp^q}\mathcal C_q(h)
\right| \leq (q-1)p^{\frac{q-1}{2}}\left(3+\frac{2}{\pi}\log p\right)^q,
\end{align*}
where
$$
\mathcal C_q(h)=q\sum_{t=1}^{h-1}\sum_{j=0}^{\left[\frac{q(t-1)}{h-1}\right]}(-1)^j\binom{q}{j}\binom{tq-j(h-1)-1}{q-1}-(h-1)^q.
$$
\end{theorem}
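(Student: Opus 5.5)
We detect the condition $q\nmid F_q(\mathbf a)$ with additive characters modulo $q$. Since $F_q(\mathbf a)$ is a sum of integers in $[1,p-1]$, we write
$$
\mathcal M_{p,q}=\frac{q-1}{q}(p-1)^{q-1}-\frac1q\sum_{m=1}^{q-1}\sum_{\mathbf a}\zeta_q^{mF_q(\mathbf a)} .
$$
Here $F_q(\mathbf a)$ is read as an honest integer, with each $a_i$ and $\overline{a_1\cdots a_{q-1}}$ taken in $\{1,\dots,p-1\}$. The task is then to evaluate, for each $m$, the sum $\Sigma_m:=\sum_{\mathbf a}\zeta_q^{mF_q(\mathbf a)}$.

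Set $a_q:=\overline{a_1\cdots a_{q-1}}$. Then $\Sigma_m$ is a sum over $q$-tuples $(a_1,\dots,a_q)\in((\Z/p\Z)^\times)^q$ with $a_1\cdots a_q\equiv1\pmod p$ of $\prod_i\zeta_q^{ma_i}$. To separate the variables we use orthogonality of multiplicative characters $\chi$ modulo $p$, or equivalently we parametrise the constraint through additive characters. I would take the additive route that the notation $S_{q,m}(j)$ suggests. Write the indicator of $a_1\cdots a_q\equiv 1$ as $\frac1p\sum_{j}\psi(j(a_1\cdots a_q-1))$. That alone does not factor, so instead I would proceed inductively: fix $a_1,\dots,a_{q-1}$, detect $a_q\equiv \overline{a_1\cdots a_{q-1}}$ by $\frac1p\sum_j\psi(j(a_q-\overline{a_1\cdots a_{q-1}}))$, and iterate, so that every variable appears linearly inside some $\psi$. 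The $j=0$ terms produce the main contributions. Each empty additive sum $\sum_{r=1}^{p-1}\zeta_q^{mr}$ equals $c_{q,m}(h)$ up to the full-period part, since $p-1\equiv h-1\pmod q$; that is, $\sum_{r=1}^{p-1}\zeta_q^{mr}=\sum_{r=1}^{h-1}\zeta_q^{mr}=c_{q,m}(h)$ by periodicity. Collecting the degenerate frequency contributions (with factors $1/p$ per layer) is what yields the secondary term
$$
\frac{1+p+\cdots+p^{q-1}}{qp^q}\sum_m c_{q,m}(h)^q=\frac{1+p+\cdots+p^{q-1}}{qp^q}\mathcal C_q(h).
$$

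The remaining terms involve the products $\prod S_{q,m}(j_i)$ of incomplete sums twisted by $\psi$, coupled through a Kloosterman-type constraint $j_1\cdots$. For these I would bound the completed multiplicative part by Deligne's bound: the hyper-Kloosterman sum in $q-1$ variables is at most $q\,p^{(q-1)/2}$. Each incomplete factor is then bounded by its $\ell^1$ norm $A_{q,m}$. For $A_{q,m}$, summing the geometric series gives $|S_{q,m}(j)|\le 1/|\sin(\pi(m/q-j/p))|$ plus $O(1)$, and the standard estimate yields $A_{q,m}\le p(3+\frac2\pi\log p)/p$-normalised, i.e.\ each factor contributes $3+\frac2\pi\log p$. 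Taking $q$ such factors, multiplying by $p^{(q-1)/2}$, and summing over the $q-1$ values of $m$ (the $1/q$ cancels the factor $q$ from Deligne) gives the error $(q-1)p^{(q-1)/2}(3+\frac2\pi\log p)^q$.

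Finally, the closed formula for $\mathcal C_q(h)$ follows by expanding $c_{q,m}(h)^q=\sum_{r_1,\dots,r_q=1}^{h-1}\zeta_q^{m(r_1+\cdots+r_q)}$ and summing over $m$. This gives $q\cdot\#\{\mathbf r\in[1,h-1]^q: q\mid \sum r_i\}-(h-1)^q$, and the count of $q$-compositions with parts in $[1,h-1]$ summing to $tq$ is given by the usual inclusion–exclusion $\sum_j(-1)^j\binom qj\binom{tq-j(h-1)-1}{q-1}$. The main obstacle is the middle step: arranging the character expansion so that the non-principal terms genuinely factor into incomplete sums times a single complete hyper-Kloosterman sum, while tracking the constant in the $\ell^1$ bound for $S_{q,m}(j)$ precisely enough to obtain $3+\frac2\pi\log p$.
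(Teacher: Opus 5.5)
Your outline has the same architecture as the paper's proof: the detector modulo $q$, an additive Fourier expansion modulo $p$, a split at $j_1\cdots j_q=0$, Deligne's bound on the non-degenerate part, an $\ell^1$ bound for $S_{q,m}$, and the composition count for $\mathcal C_q(h)$, which you handle as in Lemma~\ref{lem-c}. However, the steps that carry the content are asserted rather than derived.

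The expansion you call the main obstacle is immediate. For $a\in(\Z/p\Z)^\times$, \eqref{zeta-ma} says $\zeta_q^{ma}=\frac1p\sum_{j}S_{q,m}(j)\psi(ja)$. Apply this to each of the $q$ factors $\zeta_q^{ma_1},\dots,\zeta_q^{ma_{q-1}},\zeta_q^{m\overline{a_1\cdots a_{q-1}}}$ and sum over $a_1,\dots,a_{q-1}$; this gives $E_{p,q}^{(m)}=p^{-q}\sum_{\mathbf j}K_q(\mathbf j;p)\prod_{\nu}S_{q,m}(j_\nu)$. No iterated detection or multiplicative characters are needed.

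The more serious omission is the degenerate part. It is all of $\{j_1\cdots j_q=0\}$, including mixed supports where Deligne's bound does not apply, and ``factors $1/p$ per layer'' does not produce $1+p+\cdots+p^{q-1}$. What is needed (Lemma~\ref{Tqm-exact} and Proposition~\ref{fixed-support-proposition}) is the following:
\begin{itemize}
\item $K_q(\mathbf j;p)=(-1)^s(p-1)^{q-1-s}$ whenever exactly $s\le q-1$ coordinates are non-zero, so it depends only on the support size. If $j_q\neq0$, first sum over some $a_t$ with $t\le q-1$ and $j_t=0$, which removes the inverse term.
\item $T_{q,m}=\sum_{j\neq0}S_{q,m}(j)=-c_{q,m}(h)$.
\end{itemize}
Then each support of size $s$ contributes exactly $(p-1)^{q-1-s}c_{q,m}(h)^q$. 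The identity $\sum_{s=0}^{q-1}\binom{q}{s}(p-1)^{q-1-s}=1+p+\cdots+p^{q-1}$ then yields the secondary term.

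The $\ell^1$ step, as you sketch it, does not give the stated constant. The bound $|S_{q,m}(j)|\le 1/|\sin(\pi(m/q-j/p))|$ cannot yield $A_{q,m}\le p(3+\frac2\pi\log p)$. Take $m$ with $mh\equiv1\pmod q$. The fractional part of $pm/q$ is then $1/q$, so some $j\neq0$ has $\|m/q-j/p\|=1/(pq)$, and that single term is already at least $pq/\pi$. Your route therefore gives at best $A_{q,m}\le p\bigl(\frac{q}{\pi}+\frac2\pi\log p+O(1)\bigr)$, with a constant growing linearly in $q$ instead of the absolute constant $3$.

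The paper avoids this by keeping the numerator of the geometric sum. With $x=m/q-j/p$ and $\theta=r/q$, one has $|\sin(p\pi x)|=\sin(\pi\theta)$, hence $|S_{q,m}(j)|\le|\cos\pi\theta|+\sin(\pi\theta)\,|\cot\pi x|$. The factor $\sin(\pi\theta)\le\pi\min(\theta,1-\theta)$ cancels the contributions $\frac1\theta+\frac1{1-\theta}$ of the two frequencies $j$ nearest to $pm/q$. Alternatively, bound $|S_{q,m}(j)|\le p-1$ trivially at those two frequencies and use $1/\sin(\pi d)\le\frac{1}{\pi d}+1-\frac2\pi$ elsewhere.
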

\begin{remark}
If $p\equiv1\pmod q$, then $h=1$ and consequently
$c_{q,m}(1)=0$ for every $1\le m\le q-1$. Hence, in this case, the second term in the above expression does not occur.
\end{remark}
For $q=3$, we have the following corollary from Theorem \ref{thm:main thm q}.
\begin{corollary}\label{cor:q=3}
For the primes $p\equiv 1,2\pmod{3}$, we have 
\begin{align*}
&\left|
\mathcal M_{p,3}-\frac{2}{3}(p-1)^{2}
\right| 
\leq 2p\left(
\frac{\sqrt3}{2\pi}
\left(\frac92+2\log p\right)
+\frac12
\right)^3,   
\end{align*}
and
\begin{align*}
&\left|
\mathcal M_{p,3}-\frac{2}{3}(p-1)^{2}
+\frac{2(1+p+p^{2})}{3p^3}
\right| 
\leq 2p\left(
\frac{\sqrt3}{2\pi}
\left(\frac92+2\log p\right)
+\frac12
\right)^3,   
\end{align*}
respectively.
\end{corollary}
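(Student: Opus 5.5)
The plan is to re-run the proof of Theorem \ref{thm:main thm q} in the special case $q=3$, rather than just quote it. Quoting it only settles the main terms. Here $h\in\{1,2\}$. By the Remark, $\mathcal C_3(1)=0$, and by the note after the notations, $\mathcal C_3(2)=q-1=2$. With the theorem's normalisation $\frac{1+p+p^2}{3p^3}\mathcal C_3(h)$, this gives exactly the two displayed main terms. The error bound, however, cannot simply be specialised. Theorem \ref{thm:main thm q} gives $2p(3+\frac{2}{\pi}\log p)^3$. The corollary claims
$$2p\Bigl(\tfrac{9\sqrt3}{4\pi}+\tfrac12+\tfrac{\sqrt3}{\pi}\log p\Bigr)^3,$$
which is smaller: $\sqrt3/\pi<2/\pi$ and $\tfrac{9\sqrt3}{4\pi}+\tfrac12\approx1.74<3$. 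So the work is to sharpen the bound for the character sums $S_{3,m}(j)$ and $A_{3,m}$ that enter the error term.

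First I would recall how the error term is assembled in the proof of Theorem \ref{thm:main thm q}.
\begin{itemize}
\item Detect $3\nmid F_3(\mathbf a)$ by $1-\frac13\sum_{m=0}^{2}\zeta_3^{mF_3(\mathbf a)}$.
\item For each coordinate, expand $\zeta_3^{m a_i}$ and $\zeta_3^{m\overline{a_1a_2}}$ by finite Fourier inversion on $\Z/p\Z$. The resulting coefficients are $p^{-1}S_{3,m}(j)$.
\item Summing over $\mathbf a$ then produces Kloosterman-type sums in two variables.
\item The frequencies where some $j$ vanishes give the main term involving $c_{3,m}(h)$. The remaining part is bounded by Deligne's bound $3p$ for the two-variable Kloosterman sum, times $p^{-3}\bigl(\sum_{j}|S_{3,m}(j)|\bigr)^3$.
\end{itemize}
Hence the error is at most a constant times $p\,\bigl(p^{-1}A_{3,m}+\text{boundary contribution}\bigr)^3$, summed over $m=1,2$, which accounts for the factor $2=q-1$.

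The key new input is a sharp estimate for $S_{3,m}(j)$. Put $\omega=\zeta_3^{m}\psi(-j)$. Then $S_{3,m}(j)=\sum_{r=1}^{p-1}\omega^r$ is a geometric sum, and it equals $(\omega-\omega^{p})/(1-\omega)$. Since $\psi(-j)^p=1$, we have $\omega^p=\zeta_3^{mp}=\zeta_3^{mh}$. The numerator $\omega-\zeta_3^{mh}$ can be split as the $\zeta_3$-difference $\zeta_3^m-\zeta_3^{mh}$, of modulus $0$ or $\sqrt3$, plus a remainder that accounts for the additive term $\tfrac12$. This leads to
$$|S_{3,m}(j)|\le \frac{\sqrt3}{2\,|\sin \pi(m/3-j/p)|}+O(1).$$
I would then bound $\sum_{j=1}^{p-1}|\sin\pi(m/3-j/p)|^{-1}$ by the explicit inequality
$$\sum_{j=1}^{p-1}\frac{1}{|\sin\pi(m/3-j/p)|}\le \frac{p}{\pi}\bigl(2\log p+\tfrac92\bigr),$$
proved as follows.
\begin{itemize}
\item The points $m/3-j/p$ stay at distance at least $1/(3p)$ from the integers, because $3\nmid mp$.
\item Use $\sin(\pi x)\ge 2x$ on $[0,\tfrac12]$.
\item Compare with $\int dx/x$ via convexity, pairing the terms symmetrically on each side of the nearest integer.
\end{itemize}
This yields $A_{3,m}\le p\bigl(\tfrac{\sqrt3}{2\pi}(\tfrac92+2\log p)+\tfrac12\bigr)$, including the $j=0$ or boundary contribution. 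Substituting this into the error expression gives the stated bound for both residue classes.

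The main obstacle is the explicit constant in the sum of reciprocal sines. One has to track carefully the nearest-integer offsets $\pm\frac{1}{3p}$ and $\pm\frac{2}{3p}$. One also has to check that the boundary and $O(1)$ pieces combine into exactly the additive $\tfrac12$ inside the cube, rather than something larger. I would first try a crude version and then tune the integral comparison until the constant $\tfrac92$ comes out. The crude version is $1/\sin(\pi x)\le 1/(2\|x\|)$, followed by summing $\sum_k 1/(k-\tfrac13)+\sum_k 1/(k-\tfrac23)$ up to $p/2$.
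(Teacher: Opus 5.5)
Your reduction is sound. The main terms come from $\mathcal C_3(1)=0$ and $\mathcal C_3(2)=2$. Everything then hinges on $A_{3,m}\le p\bigl(\frac{\sqrt3}{2\pi}(\frac92+2\log p)+\frac12\bigr)$, which enters only through $p^{-2}\sum_{m=1}^{2}A_{3,m}^3$. There is no separate ``boundary contribution'': frequencies with some $j_\nu=0$ are evaluated exactly and give precisely the main term. However, neither of the two steps you propose for the $A_{3,m}$ bound works as stated.

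\textbf{First step.} Splitting $\omega-\zeta_3^{mh}=(\zeta_3^m-\zeta_3^{mh})+\zeta_3^m(\psi(-j)-1)$ does not leave an $O(1)$ remainder. After division by $1-\omega$ the remainder has modulus $|\sin(\pi j/p)|/|\sin\pi(\frac m3-\frac jp)|$, which is $\asymp p$ for the $j$ with $j/p$ closest to $m/3$. For $h=1$ this remainder is all of $S_{3,m}(j)$. The right split is $(\omega-1)+(1-\zeta_3^{mh})$, where $|1-\zeta_3^{mh}|=\sqrt3$ for every $m$ since $3\nmid mh$. Combined with $\frac{1}{1-e^{2\pi i x}}=\frac12+\frac i2\cot(\pi x)$, it gives
\[
S_{3,m}(j)=-\frac{1+\zeta_3^{mh}}{2}+\frac{i\,(1-\zeta_3^{mh})}{2}\cot\Bigl(\pi\Bigl(\frac m3-\frac jp\Bigr)\Bigr).
\]
Hence $|S_{3,m}(j)|\le\frac12+\frac{\sqrt3}{2}\bigl|\cot\pi(\frac m3-\frac jp)\bigr|$. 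This is the paper's $|\cos\pi\theta|+\sin(\pi\theta)|\cot(\cdot)|$ with $\theta=r/3$.

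\textbf{Second step, the decisive gap.} Replacing $|\cot|$ by $1/|\sin|$ and then using $\sin(\pi x)\ge 2x$ costs a factor $\pi/2$ in the leading constant. It yields only $\sum_j|\sin\pi(\frac m3-\frac jp)|^{-1}\le\frac p2(\frac92+2\log p)$, hence $A_{3,m}\le p\bigl(\frac{\sqrt3}{4}(\frac92+2\log p)+\frac12\bigr)$. Since $\frac{\sqrt3}{4}>\frac{\sqrt3}{2\pi}$, this does not prove the corollary. Tuning the integral comparison changes only the additive $\frac92$, not this multiplicative factor. Your displayed cosecant inequality may well be true, but proving it needs a finer estimate of $\csc y-\frac1y$ than anything you indicate.

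\textbf{The missing idea.} Keep the cotangent and use $|\cot(\pi d)|\le\frac{1}{\pi d}$ for $0<d\le\frac12$, which is just $\tan y\ge y$. With $d_j=\|\frac m3-\frac jp\|$, the numbers $pd_j$ are dominated by the progressions $n+\theta$ and $n+1-\theta$. This gives $\sum_j\frac{1}{pd_j}\le\frac1\theta+\frac{1}{1-\theta}+2\log p=\frac92+2\log p$.

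This is exactly the refined bound \eqref{Aqm-refined-bound} of Lemma \ref{Aqm-bound}, already proved in the paper. Specialising it to $q=3$ gives \eqref{A_{3,m}}. Inserting that into the last step of the proof of Theorem \ref{thm:main thm q} gives the corollary directly, so no re-derivation is needed.
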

\section{Auxiliary results}

In this section we collect the necessary results which are needed to prove Theorem \ref{thm:main thm q}. We first observe that the function 
$$
\frac{1}{2}\left(1-(-1)^{a+\Bar{a}}\right)
$$
serves as a characteristic function for Lehmer numbers $a$ modulo $p$. For the generalised Lehmer $q$-tuples, we have the condition $F_q(\mathbf a) \not\equiv 0 \pmod q$. Hence, we note the following characteristic equation 
\begin{equation}\label{q-characteristic}
\frac{1}{q}\left((q-1)-\sum_{m=1}^{q-1}
\zeta_q^{mF_q(\mathbf a)}\right)
=
\begin{cases}
1 & \text{if $F_q(\mathbf a)\not\equiv0\pmod q$,}\\
0 & \text{otherwise}.
\end{cases}
\end{equation}

For $a,r\in\Z/p\Z$, the additive-character orthogonality relation gives
\begin{equation}\label{q-delta-symbol}
\frac{1}{p}\sum_{j\in\Z/p\Z}\psi(j(a-r))
=
\begin{cases}
1& \text{if $a=r$},\\
0& \text{otherwise.}
\end{cases}
\end{equation} 
For a fixed residue class $m$ modulo $q$, we multiply the equation \eqref{q-delta-symbol} by $\zeta_q^{mr}$. Then summing over
$r\in(\Z/p\Z)^{\times}$, for an integer
$a\in(\Z/p\Z)^{\times}$, we obtain the following finite Fourier expansion:
\begin{equation}\label{zeta-ma}
\zeta_q^{ma}
=\frac{1}{p}
\sum_{r\in(\Z/p\Z)^{\times}}\sum_{j\in\Z/p\Z}
\zeta_q^{mr}\psi(j(a-r)).
\end{equation}
Next, we will estimate the sum $T_{q,m}$.
\begin{lemma}\label{Tqm-exact}
For $1\le m,h\le q-1$, we have $
T_{q,m}
=-c_{q,m}(h).$
\end{lemma}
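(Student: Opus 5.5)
The plan is to compute $T_{q,m}$ exactly by interchanging the order of summation, so that the additive character sum over $j$ collapses. Throughout, $r$ runs over the integer representatives $1,\ldots,p-1$ of $(\Z/p\Z)^{\times}$; this matters because $\zeta_q^{mr}$ depends on the integer $r$ and not only on its class modulo $p$.

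\emph{Step 1: collapse the $j$-sum.} By definition,
\begin{equation*}
T_{q,m}=\sum_{j=1}^{p-1}\sum_{r=1}^{p-1}\zeta_q^{mr}\psi(-jr)=\sum_{r=1}^{p-1}\zeta_q^{mr}\sum_{j=1}^{p-1}\psi(-jr).
\end{equation*}
For $1\le r\le p-1$ we have $p\nmid r$. Hence, by the orthogonality relation \eqref{q-delta-symbol}, the full sum $\sum_{j\in\Z/p\Z}\psi(-jr)$ vanishes. Removing the $j=0$ term, which equals $1$, gives
\begin{equation*}
\sum_{j=1}^{p-1}\psi(-jr)=-1.
\end{equation*}
Therefore $T_{q,m}=-\sum_{r=1}^{p-1}\zeta_q^{mr}$.

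\emph{Step 2: evaluate the geometric sum.} Since $q$ is prime and $1\le m\le q-1$, the number $\zeta_q^m$ is a nontrivial $q$-th root of unity. Consequently any $q$ consecutive powers satisfy
\begin{equation*}
\sum_{r=s+1}^{s+q}\zeta_q^{mr}=0.
\end{equation*}
Write $p-1=kq+(h-1)$ with $k\ge 0$. This is possible because $p\equiv h\pmod q$ with $1\le h\le q-1$, so $0\le h-1\le q-2$. Split the range $1\le r\le p-1$ into $k$ complete blocks of length $q$, which contribute nothing, and a tail $kq+1\le r\le kq+h-1$. Since $\zeta_q^{m(kq+t)}=\zeta_q^{mt}$, the tail equals
\begin{equation*}
\sum_{t=1}^{h-1}\zeta_q^{mt}=c_{q,m}(h).
\end{equation*}
When $h=1$ the tail is empty and $c_{q,m}(1)=0$, which is consistent with the empty-sum convention.

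Combining the two steps gives $T_{q,m}=-c_{q,m}(h)$. There is no real obstacle in this argument. The only point needing care is the bookkeeping in Step 2, namely using the integer representatives $1,\ldots,p-1$ and identifying the length $h-1$ of the leftover block from $p\equiv h\pmod q$.
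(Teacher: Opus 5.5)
Your proof is correct and follows the paper's argument: you interchange the sums, use orthogonality to get $\sum_{j=1}^{p-1}\psi(-jr)=-1$, and reduce to $-\sum_{r=1}^{p-1}\zeta_q^{mr}$. The only difference is that you evaluate this geometric sum explicitly by splitting it into blocks of length $q$. The paper instead identifies it with $S_{q,m}(0)=c_{q,m}(h)$, and justifies that identity with the same block decomposition only later in the text.
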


\begin{proof}
Applying \eqref{q-delta-symbol} for $a=0$, we first note that
\begin{equation}\label{orthogonality-nonzero-q}
\sum_{j=1}^{p-1}\psi(-jr)=-1,
\end{equation}
for every $r\ne0$. Now envoking the above character sum from $\eqref{orthogonality-nonzero-q}$, we get
$$
T_{q,m}=\sum_{j=1}^{p-1}S_{q,m}(j)=\sum_{r=1}^{p-1}\zeta_q^{mr}\sum_{j=1}^{p-1}\psi(-jr)
=-\sum_{r=1}^{p-1}\zeta_q^{mr}
=-S_{q,m}(0)=-c_{q,m}(h).
$$
\end{proof} 
We now consider the sum $A_{q,m}$ and derive the following upper bound.
\begin{lemma}\label{Aqm-bound}
Let $1\leq m,h\leq q-1$, and $r\in\{1,\ldots,q-1\}$ be determined by
\[
r\equiv mh\pmod q.
\]
Then we get
\begin{equation}\label{Aqm-refined-bound}
\begin{aligned}
A_{q,m}
\leq p\Bigg[
&\left|\cos\left(\frac{\pi r}{q}\right)\right|
+\frac{\sin\left(\frac{\pi r}{q}\right)}{\pi}\times
\left(
\frac{q}{r}+\frac{q}{q-r}+2\log p
\right)
\Bigg].
\end{aligned}
\end{equation}
In particular,
\[
A_{q,m}
\leq p\left(3+\frac{2}{\pi}\log p\right)
<p(3+\log p).
\]
% and consequently, 
% $$
% \sum_{m=1}^{q-1}A_{q,m}^{q}\le (q-1)p^q\left(3+\frac{2}{\pi}\log p\right)^q.
% $$
\end{lemma}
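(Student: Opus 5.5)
The plan is to evaluate each $S_{q,m}(j)$ in closed form as a geometric series, extract a cotangent, and then sum the cotangents over $j$.

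\textbf{Closed form for $S_{q,m}(j)$.} Fix $1\le j\le p-1$ and put $w=\zeta_q^{m}\psi(-j)=e^{2\pi i\theta}$ with $\theta=\frac{m}{q}-\frac{j}{p}$. Since $q\nmid m$ and $p\nmid j$, we have $\theta\notin\Z$, so $w\ne1$. Summing the geometric series over the representatives $r=1,\ldots,p-1$ gives
$$S_{q,m}(j)=\sum_{r=1}^{p-1}w^r=\frac{1-w^p}{1-w}-1 .$$
Moreover $w^p=\zeta_q^{mp}=\zeta_q^{mh}=\zeta_q^{r}$, where $r\equiv mh\pmod q$ is as in the statement. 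Using the identity $\frac{1}{1-e^{i\phi}}=\frac12+\frac{i}{2}\cot\frac{\phi}{2}$, I would rewrite this as
$$S_{q,m}(j)=-\frac{1+\zeta_q^{r}}{2}+\frac{i(1-\zeta_q^{r})}{2}\cot(\pi\theta).$$
Since $\bigl|\frac{1+\zeta_q^r}{2}\bigr|=\bigl|\cos\frac{\pi r}{q}\bigr|$ and $\bigl|\frac{1-\zeta_q^r}{2}\bigr|=\sin\frac{\pi r}{q}$, this yields
$$|S_{q,m}(j)|\le\Bigl|\cos\frac{\pi r}{q}\Bigr|+\sin\frac{\pi r}{q}\,\bigl|\cot\pi\theta_j\bigr| .$$
Summing the first term over the $p-1$ values of $j$ contributes at most $p|\cos(\pi r/q)|$.

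\textbf{Summing the cotangents (main step).} It remains to show
$$\sum_{j=1}^{p-1}|\cot\pi\theta_j|\le\frac{p}{\pi}\left(\frac{q}{r}+\frac{q}{q-r}+2\log p\right).$$
I would use $|\cot\pi x|\le\frac{1}{\pi\|x\|}$, where $\|x\|$ is the distance to the nearest integer, and write
$$\theta_j=\frac{mp-jq}{pq}.$$
The numerators $mp-jq$ are all $\equiv r\pmod q$ and are distinct modulo $pq$. Hence, on the circle $\mathbb R/\Z$, the points $\theta_j$ lie on a progression of spacing $1/p$ shifted off $0$. The point nearest to $0$ on one side is at distance at least $r/(pq)$, and on the other side at least $(q-r)/(pq)$. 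These two points contribute at most $\frac{pq}{\pi r}+\frac{pq}{\pi(q-r)}$.

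On each side, the $k$-th further point is at distance at least $k/p$, for $1\le k\le p/2$. These contribute at most
$$\frac{2p}{\pi}\sum_{k\le p/2}\frac1k\le\frac{2p}{\pi}\log p,$$
with the harmonic-sum bookkeeping (how many points lie on each side, and the harmonic bound for $p\ge5$) being the only delicate point. Multiplying by $\sin(\pi r/q)$ and adding the cosine contribution gives \eqref{Aqm-refined-bound}.

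\textbf{The ``in particular'' bound.} Use $|\cos|\le1$ and $\sin(\pi r/q)\le1$ on the logarithmic term. By concavity, $\sin(\pi x)\le\pi x$ and $\sin(\pi x)\le\pi(1-x)$ on $[0,1]$. Hence $\frac{\sin(\pi r/q)}{\pi}\cdot\frac{q}{r}\le1$ and $\frac{\sin(\pi r/q)}{\pi}\cdot\frac{q}{q-r}\le1$. This gives $A_{q,m}\le p\left(3+\frac{2}{\pi}\log p\right)$, and the final inequality follows from $2/\pi<1$.
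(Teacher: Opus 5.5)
Your proposal is correct and follows essentially the same route as the paper. Both arguments reduce $|S_{q,m}(j)|$ to $|\cos(\pi r/q)|+\sin(\pi r/q)\,|\cot\pi\theta_j|$, apply $|\cot\pi x|\le 1/(\pi\|x\|)$ on the shifted progression $(n+r/q)/p$ so that the two nearest points give $q/r+q/(q-r)$ and the rest form a harmonic tail, and finish with $\sin(\pi x)\le\pi\min(x,1-x)$. The differences are cosmetic: you use the exact identity via $1/(1-e^{i\phi})$ where the paper uses $\sin((p-1)\pi\theta)/\sin(\pi\theta)$, and you bound each side's tail by $H_{(p-1)/2}\le\log p$ (true, e.g. by telescoping $1/k\le\log\frac{2k+1}{2k-1}$) where the paper pairs terms via $\frac{1}{k+\theta}+\frac{1}{k+1-\theta}\le\frac1k+\frac1{k+1}$.
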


\begin{proof}
Recall that 
$$
A_{q,m}= \sum_{j=1}^{p-1} \left| S_{q, m}(j) \right|= \sum_{j=1}^{p-1}\left|\sum_{r \in (\Z/p\Z)^{\times}}\zeta_q^{mr}\psi(-jr)\right|.
$$
We first write $S_{q,m}(j)$ as the following geometric sum for $1\le j\le p-1$, 
\begin{equation}\label{Sqm-cotangent-identity}
S_{q,m}(j)=\sum_{r=1}^{p-1}e^{2i\pi r\left(\frac{m}{q}-\frac{j}{p}\right)}=e^{ip\pi\left(\frac{m}{q}-\frac{j}{p}\right)}\frac{\sin\left((p-1)\pi\left(\frac{m}{q}-\frac{j}{p}\right)\right)}{\sin \left(\pi\left(\frac{m}{q}-\frac{j}{p}\right)\right)}.    
\end{equation}
Let us assume that $\theta={r}/{q}.$
Since $r\equiv mh\pmod q$, the numbers $\pi mh/q$ and $\pi\theta$
differ by an integral multiple of $\pi$. Hence, we have
\[
\left|\sin\left(\frac{\pi mh}{q}\right)\right|
=\sin(\pi\theta) \qquad \text{and} \qquad
\left|\cos\left(\frac{\pi mh}{q}\right)\right|
=|\cos(\pi\theta)|.
\]
From \eqref{Sqm-cotangent-identity}, we have
\[
|S_{q,m}(j)|
\leq
|\cos(\pi\theta)|
+\sin(\pi\theta)
\left|
\cot\left(
\pi\left(\frac{m}{q}-\frac{j}{p}\right)
\right)
\right|.
\]

For a real number of $x$, let $\|x\|$ denotes the distance from $x$ to the nearest integer. Then for $0\leq j\leq p-1$, we define the following
\[
d_j:=\left\|\frac{m}{q}-\frac{j}{p}\right\|.
\]
Since $\gcd(p,q)=1$, observe that $d_j\neq0$ for all $j$. Moreover, we have the following inequality
\[
\left|
\cot\left(
\pi\left(\frac{m}{q}-\frac{j}{p}\right)
\right)
\right|
=|\cot(\pi d_j)|
\leq\frac{1}{\pi d_j}.
\]

Let us assume $p=2N+1$ for some $N\ge 1$. Since the fractional part of $pm/q$ is $\theta$,
translating the indices modulo $p$, we note that the numbers $pd_j$ are obtained from
the sequences $\{n+\theta\}_{n\ge 0}$ and $\{n-\theta\}_{n\ge 0}$,
% \[
% \theta,\ 1+\theta,\ 2+\theta,\ldots
% \]
% and
% \[
% 1-\theta,\ 2-\theta,\ 3-\theta,\ldots,
% \]
truncated when the distance reaches $p/2$. Consequently,
\[
\sum_{j=0}^{p-1}\frac{1}{pd_j}
\leq
\frac{1}{\theta}+\frac{1}{1-\theta}
+\sum_{k=1}^{N}
\left(
\frac{1}{k+\theta}
+\frac{1}{k+1-\theta}
\right).
\]
For $0<\theta<1$ and $k\geq1$, we have
\[
\frac{1}{k+\theta}
+\frac{1}{k+1-\theta}
\leq
\frac{1}{k}+\frac{1}{k+1}.
\]
Therefore, we have
\[
\sum_{k=1}^{N}
\left(
\frac{1}{k+\theta}
+\frac{1}{k+1-\theta}
\right)
\leq \sum_{k=1}^{N}\frac{1}{k}+\sum_{k=1}^{N+1}\frac{1}{k}
\leq2\log p,
\]
and consequently, 
\[
\sum_{j=0}^{p-1}
\left|
\cot\left(
\pi\left(\frac{m}{q}-\frac{j}{p}\right)
\right)
\right|
\leq
\frac{p}{\pi}
\left(
\frac{1}{\theta}
+\frac{1}{1-\theta}
+2\log p
\right).
\]
We now sum the estimate for $|S_{q,m}(j)|$ over $1\leq j\leq p-1$, to obtain the following upper bound of $A_{q,m}$:
\[
A_{q,m}
\leq
p\left[
|\cos(\pi\theta)|
+\frac{\sin(\pi\theta)}{\pi}
\left(
\frac{1}{\theta}
+\frac{1}{1-\theta}
+2\log p
\right)
\right].
\]
Substituting $\theta=r/q$ we get \eqref{Aqm-refined-bound}.

As $\sin(\pi\theta)\leq\pi\theta$ and $\sin(\pi\theta)\leq\pi(1-\theta)$
% \[
% \sin(\pi\theta)\leq\pi\theta
% \qquad\text{and}\qquad
% \sin(\pi\theta)\leq\pi(1-\theta).
% \]
we conclude that
% \[
% \frac{\sin(\pi\theta)}{\pi}
% \left(
% \frac{1}{\theta}+\frac{1}{1-\theta}
% \right)
% \leq2.
% \]
%Finally, using $|\cos(\pi\theta)|\leq1$ and $\sin(\pi\theta)\leq1$, we conclude that
\[
A_{q,m}
\leq p\left(3+\frac{2}{\pi}\log p\right)
<p(3+\log p).
\]
\end{proof}

\begin{remark}
When $q=3$, we have $r\in\{1,2\}$. Therefore, we have 
\[
\left|\cos\left(\frac{\pi r}{3}\right)\right|=\frac12,
\qquad
\sin\left(\frac{\pi r}{3}\right)=\frac{\sqrt3}{2},
\]
and
\[
\frac{3}{r}+\frac{3}{3-r}=\frac92.
\]
Putting this all together, we get
\begin{equation}\label{A_{3,m}}
A_{3,m}
\leq
p\left(
\frac{\sqrt3}{2\pi}
\left(\frac92+2\log p\right)
+\frac12
\right).    
\end{equation}
\end{remark}

For a fixed residue class $h$ modulo $q$, we now derive a formula for the sum $\mathcal C_q(h).$ If $tr$ denotes trace of $\mathbb{Q}(\zeta_q)$ over $\mathbb{Q}$, then we obtain
$$\mathcal C_q(h) = tr \bigg(\frac{1- \zeta_q^{h-1}}{1 - \zeta^q} \bigg)^q.$$
In particular, we have
$$\mathcal C_q(2)=q-1,~ \mathcal C_q(h) = - \mathcal C_q(q+2-h)~\forall~
2 \le h \le q-1.$$
\begin{lemma}\label{lem-c}
For $h\in\{1,\ldots,q-1\}$ we have
$$
\mathcal C_q(h)=q\sum_{t=1}^{h-1}\sum_{j=0}^{\left[\frac{q(t-1)}{h-1}\right]}(-1)^j\binom{q}{j}\binom{tq-j(h-1)-1}{q-1}-(h-1)^q.
$$
\end{lemma}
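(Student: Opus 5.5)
The plan is to read $c_{q,m}(h)$ as the value of a single polynomial at the $q$-th roots of unity, and then use the roots-of-unity filter to turn the power sum $\mathcal C_q(h)$ into a sum of coefficients, which can be extracted by the binomial theorem. Set
$$P_h(x):=\sum_{r=1}^{h-1}x^r=\frac{x(1-x^{h-1})}{1-x},$$
so that $c_{q,m}(h)=P_h(\zeta_q^m)$ for every $m$. Hence $\mathcal C_q(h)=\sum_{m=1}^{q-1}P_h(\zeta_q^m)^q$.

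First I will add and subtract the $m=0$ term, which equals $P_h(1)^q=(h-1)^q$:
$$\mathcal C_q(h)=\sum_{m=0}^{q-1}P_h(\zeta_q^m)^q-(h-1)^q .$$
Expanding $P_h(x)^q=\sum_n b_n x^n$ and using orthogonality $\sum_{m=0}^{q-1}\zeta_q^{mn}=q\cdot\mathbf 1_{q\mid n}$, the full sum becomes $q\sum_{q\mid n}b_n$. Since $P_h(x)^q$ has exponents only in the range $[q,\,q(h-1)]$, the relevant exponents are $n=tq$ with $1\le t\le h-1$. This gives
$$\mathcal C_q(h)=q\sum_{t=1}^{h-1}[x^{tq}]\,P_h(x)^q-(h-1)^q.$$

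Next I compute the coefficient. From $P_h(x)^q=x^q(1-x^{h-1})^q(1-x)^{-q}$ we get $[x^{tq}]P_h(x)^q=[x^{(t-1)q}](1-x^{h-1})^q(1-x)^{-q}$. Expanding $(1-x^{h-1})^q=\sum_j(-1)^j\binom qj x^{j(h-1)}$ and $(1-x)^{-q}=\sum_k\binom{k+q-1}{q-1}x^k$, a term contributes when $k=(t-1)q-j(h-1)\ge 0$, i.e. $0\le j\le\bigl[\frac{q(t-1)}{h-1}\bigr]$. Each such term has coefficient $(-1)^j\binom qj\binom{tq-j(h-1)-1}{q-1}$.

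This upper limit never exceeds $q$: it is at most $q(h-2)/(h-1)<q$. So no terms of the binomial expansion are lost or spuriously included, and substituting gives exactly the stated formula.

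The only delicate point is the degenerate case $h=1$. There $P_1\equiv 0$, so every $c_{q,m}(1)=0$, and the right-hand side is an empty outer sum minus $0^q=0$. Both sides vanish, provided the inner upper limit, which involves division by $h-1=0$, is read as irrelevant because the outer sum is empty. Apart from this convention and the bookkeeping of exponent ranges, I expect no real obstacle.
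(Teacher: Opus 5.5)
Your proof is correct and takes essentially the paper's route. Adding and subtracting the $m=0$ term in the roots-of-unity filter is the paper's orthogonality step yielding $\mathcal C_q(h)=qN_q(h)-(h-1)^q$, and your coefficient extraction from $x^q(1-x^{h-1})^q(1-x)^{-q}$ carries out the generating-function computation that the paper only asserts ("one can show"), including the checks on the range of $j$ and the degenerate case $h=1$ that the paper leaves implicit.
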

\begin{proof}
% Note that for 
% \begin{equation*}
% c_{q,m}(h):=\sum_{r=1}^{h-1}\zeta_q^{mr},
% \qquad
% \mathcal C_q(h):=\sum_{m=1}^{q-1}c_{q,m}(h)^q.
% \end{equation*}
% For $h=q$, the sums $c_{q,m}(h)$ are nothing but the Ramanujan sums. 
Note that, we have 
\begin{align*}
\mathcal C_q(h) = \sum_{m=1}^{q-1}\left(\sum_{r_1=1}^{h-1}\zeta_q^{mr_1}\right)\cdots\left(\sum_{r_q=1}^{h-1}\zeta_q^{mr_q}\right) = \sum_{r_1,\ldots,r_q=1}^{h-1} \ \sum_{m=1}^{q-1}\zeta_q^{m(r_1+\cdots+r_q)}.
\end{align*}
Now we note the following orthogonality relation
\begin{equation}\label{eq-A}
\sum_{m=1}^{q-1}\zeta_q^{m(r_1+\cdots+r_q)}=\begin{cases}
(q-1) & \text{if $q\mid (r_1+\cdots+r_q)$,}\\
-1 & \text{otherwise.}
\end{cases}    
\end{equation}
Applying \eqref{eq-A}, we write $\mathcal C_q(h)$ as 
\begin{align*}
\mathcal C_q(h) &= (q-1)\sum_{\substack{1\le r_1,\ldots,r_q\le h-1\\r_1+\cdots+r_q \equiv 0 \bmod{q}} }1-\sum_{\substack{1\le r_1,\ldots,r_q\le h-1\\r_1+\cdots+r_q \not\equiv 0 \bmod{q}} }1\\
&=q\sum_{\substack{1\le r_1,\ldots,r_q\le h-1\\r_1+\cdots+r_q \equiv 0 \bmod{q}}}1-\sum_{\substack{1\le r_1,\ldots,r_q\le h-1}}1\\
&=qN_q(h)-(h-1)^q
\end{align*}
where $N_q(h)$ counts the number of solutions of the equation 
$r_1+\cdots+r_q \equiv 0 \bmod{q}$ where $1\le r_1,\ldots,r_q\le h-1$. Hence, $r_1+\cdots+r_q=tq$ for some $t\in \{1,\ldots,h-1\}$. Therefore, we are left to count the number of non-negative integral solutions of the equation
$$
r_1+\cdots+r_q=q(t-1); \quad \quad \text{with} \quad \quad 0\le r_i\le h-2.
$$
Using the theory of generating functions, one can show that 
\begin{equation}\label{eq-N_q}
N_q(h)=\sum_{t=1}^{h-1}\sum_{j=0}^{\left[\frac{q(t-1)}{h-1}\right]}(-1)^j\binom{q}{j}\binom{tq-j(h-1)-1}{q-1}.
\end{equation}
This completes the proof of Lemma \ref{lem-c} and hence the proof of Theorem \ref{thm:main thm q}.

\end{proof}

Finally, we end this section by recording Deligne's bound for the hyper-Kloosterman sums.
For $(j_1,\ldots,j_q)\in(\Z/p\Z)^q$, let $K_q(j_1,\ldots,j_q;p)$ denote the following hyper-Kloosterman sums
\begin{equation*}\label{hyper-Kloosterman-q}
K_q(j_1,\ldots,j_q;p)
:=\sum_{a_1,\ldots,a_{q-1}\in(\Z/p\Z)^{\times}}
\psi\left(\sum_{r=1}^{q-1}j_ra_r
+j_q\overline{a_1a_2\cdots a_{q-1}}\right).
\end{equation*}  

In this context, we note the following bound due to Deligne. With $x_r=j_ra_r$ for $1\leq r\leq q-1$ and $x_q=(j_1\cdots j_q)\overline{(x_1\cdots x_{q-1})}$, one can define $K_q(j_1,\ldots,j_q;p)$ as per the formula of Kowalski et al. \cite{KMS} with $a=j_1\cdots j_q$. As a consequence of the deep results of Deligne on Weil's conjecture, we quote the following upper bound. 
\begin{lemma}\label{Deligne-q}\cite{KMS}
For $j_1j_2\cdots j_q\ne0$, we have
$$
|K_q(j_1,\ldots,j_q;p)|
\leq q p^{\frac{q-1}{2}}.
$$
\end{lemma}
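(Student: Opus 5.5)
The plan is to reduce $K_q(j_1,\ldots,j_q;p)$ to the classical (unnormalised) hyper-Kloosterman sum in $q$ variables, and then quote Deligne's theorem in the form recorded by Kowalski et al.~\cite{KMS}. We do not reprove Deligne's theorem; the only work is to check that our sum is exactly the sum covered by that result.

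\emph{Step 1: change of variables.} Since $j_1\cdots j_q\neq 0$, each $j_r$ with $1\le r\le q-1$ is a unit modulo $p$. Hence the substitution $x_r=j_ra_r$ is a bijection of $(\Z/p\Z)^{\times}$ for each such $r$. Under this substitution
\[
j_q\overline{a_1\cdots a_{q-1}}=j_q\,j_1\cdots j_{q-1}\,\overline{x_1\cdots x_{q-1}}=a\,\overline{x_1\cdots x_{q-1}},\qquad a:=j_1\cdots j_q .
\]
Setting $x_q:=a\overline{x_1\cdots x_{q-1}}$ we get
\[
K_q(j_1,\ldots,j_q;p)=\sum_{\substack{x_1,\ldots,x_q\in(\Z/p\Z)^{\times}\\ x_1x_2\cdots x_q=a}}\psi(x_1+\cdots+x_q).
\]
The right-hand side is the hyper-Kloosterman sum of rank $q$ at the nonzero point $a$. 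In the normalisation of \cite{KMS}, it equals $(-1)^{q-1}p^{\frac{q-1}{2}}\,\mathrm{Kl}_q(a;p)$, or differs from this by a harmless sign convention.

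\emph{Step 2: Deligne's bound.} For $a\in(\Z/p\Z)^{\times}$, Deligne showed that $\mathrm{Kl}_q(a;p)$ is, up to sign, the trace of Frobenius at $a$ on the Kloosterman sheaf $\mathcal{K}\ell_q$. This sheaf is lisse of rank $q$ on $\mathbb{G}_m$ and pure of weight $0$ after normalisation. The trace is therefore a sum of $q$ complex numbers of modulus $1$, which gives $|\mathrm{Kl}_q(a;p)|\le q$. Multiplying back by $p^{\frac{q-1}{2}}$ yields
\[
|K_q(j_1,\ldots,j_q;p)|\le q\,p^{\frac{q-1}{2}} .
\]

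\emph{Expected obstacle.} The genuine difficulty is entirely in Step 2, namely Deligne's purity theorem for $\mathcal{K}\ell_q$, which we cite rather than reprove. Within our own argument, the only point needing care is bookkeeping: the change of variables must use that every $j_r$ is invertible (this is exactly the hypothesis $j_1\cdots j_q\neq0$), and the normalising factor $p^{\frac{q-1}{2}}$ and sign in \cite{KMS} must be matched correctly. If some $j_r$ vanished, the sum would instead degenerate into products of Ramanujan-type sums. Those terms have to be handled separately in the proof of Theorem~\ref{thm:main thm q}, which is presumably where Lemmas~\ref{Tqm-exact} and~\ref{Aqm-bound} enter.
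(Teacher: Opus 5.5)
Your proposal is correct and matches the paper's approach: the substitution $x_r=j_ra_r$ for $1\le r\le q-1$ and $x_q=a\overline{x_1\cdots x_{q-1}}$ with $a=j_1\cdots j_q\neq0$ identifies $K_q(j_1,\ldots,j_q;p)$ with the unnormalised rank-$q$ hyper-Kloosterman sum at $a$, and the bound $q\,p^{\frac{q-1}{2}}$ is then quoted from Deligne's theorem as recorded in \cite{KMS}. You spell out the change of variables and the normalisation more explicitly than the paper does, and the sign convention you flag does not matter here, since $(-1)^{q-1}=1$ for odd $q$.
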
 

\section{Proof of Theorem \ref{thm:main thm q}}
We begin with the characteristic function of the generalised Lehmer $q$-tuples. Applying \eqref{q-characteristic}, we have
\begin{align}\label{Mpq-decomposition}
\mathcal M_{p,q}
&=\frac{1}{q}
\sum_{a_1,\ldots,a_{q-1}\in(\Z/p\Z)^{\times}}
\left((q-1)-\sum_{m=1}^{q-1}
\zeta_q^{mF_q(\mathbf a)}\right)=\frac{q-1}{q}(p-1)^{q-1}
-\frac{1}{q}\sum_{m=1}^{q-1}E_{p,q}^{(m)},
\end{align}
where
$$
E_{p,q}^{(m)}
:=\sum_{a_1,\ldots,a_{q-1}\in(\Z/p\Z)^{\times}}
\zeta_q^{m(a_1+\cdots+a_{q-1}+\overline{a_1\cdots a_{q-1}})}.
$$
We need to estimate the sum $\displaystyle \sum_{m=1}^{q-1}E_{p,q}^{(m)}$.
Applying \eqref{zeta-ma} to each of the $q-1$ variables and to
$\overline{a_1\cdots a_{q-1}}$, we rewrite $E_{p,q}^{(m)}$ as
\begin{equation}\label{general-fourier-styled}
E_{p,q}^{(m)}
=\frac{1}{p^q}
\sum_{j_1,\ldots,j_q\in\Z/p\Z}
E_{p,q}^{(m)}(j_1,\ldots,j_q),
\end{equation}
where
\begin{equation*}\label{general-error-term}
E_{p,q}^{(m)}(j_1,\ldots,j_q)
:=K_q(j_1,\ldots,j_q;p)
\prod_{\nu=1}^{q}S_{q,m}(j_\nu).
\end{equation*}

To estimate the above terms, we consider the following two cases, namely,
when at least one of $j_1,\ldots,j_q$ is zero and when all of them are
non-zero. In other words, we write  
\begin{align}\label{a}
\mathcal M_{p,q}-\frac{q-1}{q}(p-1)^{q-1}
=&-\frac{1}{qp^q}\sum_{j_1,\ldots,j_q\in\mathbb Z/p\mathbb Z} \ \sum_{m=1}^{q-1}E_{p,q}^{(m)}(j_1,\ldots,j_q)\nonumber\\
=&-\frac{1}{q p^q}\sum_{\substack{j_1,\ldots,j_q\in \Z/p\Z \\ j_1\cdots j_q=0}} \ \sum_{m=1}^{q-1}E_{p,q}^{(m)}(j_1,\ldots,j_q)\nonumber\\
&-\frac{1}{q p^q}\sum_{\substack{j_1,\ldots,j_q\in \Z/p\Z \\ j_1\cdots j_q\neq0}} \ \sum_{m=1}^{q-1}E_{p,q}^{(m)}(j_1,\ldots,j_q).
\end{align}

\subsection{At least one of \texorpdfstring{$j_1,\ldots,j_q$}{j1,...,jq} is zero}

We first analyse the terms $E_{p,q}^{(m)}(j_1,\ldots,j_q)$ when at least one
of $j_1,\ldots,j_q$ is zero. Note that, for a prime $p \equiv h \bmod{q}$, we have 
\begin{equation}\label{S-zero-q}
S_{q,m}(0)
=\sum_{r=1}^{p-1}\zeta_q^{mr}
=\sum_{r=1}^{h-1}\zeta_q^{mr}
=c_{q,m}(h),
\end{equation}
where we have divided the first sum into complete blocks of length $q$.
\begin{remark}\label{zero-terms-h1}
Suppose that $p\equiv1\pmod q$. Then $h=1$, and hence
$S_{q,m}(0)=0$ by \eqref{S-zero-q}. Therefore, if at least one of
$j_1,\ldots,j_q$ is zero, the product
$\prod_{\nu=1}^{q}S_{q,m}(j_\nu)$ vanishes. Consequently,
$
E_{p,q}^{(m)}(j_1,\ldots,j_q)=0.
$
\end{remark}

Using the above observation, we first obtain a more general statement for
every prescribed set of non-zero coordinates. For
$\mathbf j=(j_1,\ldots,j_q)$, let
$$
\operatorname{supp}(\mathbf j):=
\{\nu\in\{1,\ldots,q\}:j_\nu\ne0\}.
$$

\begin{proposition}\label{fixed-support-proposition}
Let $I\subseteq\{1,\ldots,q\}$ be a set of cardinality $s$, where
$0\le s\le q-1$. For every $1\le m\le q-1$, we have
\begin{equation}\label{fixed-support-sum}
\sum_{\substack{j_1,\ldots,j_q\in\Z/p\Z\\
\operatorname{supp}(\mathbf j)=I}}
E_{p,q}^{(m)}(j_1,\ldots,j_q)
=(p-1)^{q-1-s}c_{q,m}(h)^q.
\end{equation}
\end{proposition}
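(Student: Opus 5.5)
The plan is to unfold both the hyper-Kloosterman sum and the sums $S_{q,m}(j_\nu)$, and to carry out the summation over the nonzero coordinates $j_\nu$, $\nu\in I$, before the summation over $\mathbf a$. Write $x_\nu=a_\nu$ for $1\le \nu\le q-1$ and $x_q=\overline{a_1\cdots a_{q-1}}$, so that
$$
K_q(j_1,\ldots,j_q;p)=\sum_{\mathbf a}\psi\Bigl(\sum_{\nu=1}^q j_\nu x_\nu\Bigr).
$$
Hence
$$
\sum_{\operatorname{supp}(\mathbf j)=I}E^{(m)}_{p,q}(\mathbf j)=\sum_{\mathbf a\in((\Z/p\Z)^\times)^{q-1}}\ \prod_{\nu\notin I}S_{q,m}(0)\ \prod_{\nu\in I}\Bigl(\sum_{j=1}^{p-1}S_{q,m}(j)\psi(jx_\nu)\Bigr).
$$
For $\nu\notin I$ we have $j_\nu=0$, and \eqref{S-zero-q} gives the factor $S_{q,m}(0)=c_{q,m}(h)$, contributing $c_{q,m}(h)^{q-s}$ in total.

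The first key step is to evaluate the inner sum for a unit $x$. Inserting the definition of $S_{q,m}(j)$ and interchanging sums, we get
$$
\sum_{j=1}^{p-1}S_{q,m}(j)\psi(jx)=\sum_{r=1}^{p-1}\zeta_q^{mr}\sum_{j=1}^{p-1}\psi(j(x-r)).
$$
By \eqref{q-delta-symbol}, the inner sum equals $p\cdot\mathbf 1_{r=x}-1$. Therefore
$$
\sum_{j=1}^{p-1}S_{q,m}(j)\psi(jx)=p\,\zeta_q^{mx}-c_{q,m}(h),
$$
using $\sum_{r=1}^{p-1}\zeta_q^{mr}=c_{q,m}(h)$. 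This is the same computation as in Lemma \ref{Tqm-exact}, but twisted by $\psi(jx)$. So the left side of \eqref{fixed-support-sum} equals
$$
c_{q,m}(h)^{q-s}\sum_{\mathbf a}\prod_{\nu\in I}\bigl(p\,\zeta_q^{mx_\nu}-c_{q,m}(h)\bigr).
$$

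The second step, which is the only real point, uses $s\le q-1$. Since $I$ misses some index $\nu_0$, the map $\mathbf a\mapsto(x_\nu)_{\nu\neq\nu_0}$ is a bijection of $((\Z/p\Z)^\times)^{q-1}$ onto itself. Indeed, if $\nu_0=q$ it is the identity. If $\nu_0\le q-1$, then $a_{\nu_0}$ is recovered as $\overline{x_q\prod_{\nu\ne \nu_0,q}x_\nu}$. Thus the $x_\nu$, $\nu\in I$, range independently over all units, and the remaining $q-1-s$ free coordinates contribute a factor $(p-1)^{q-1-s}$. The sum over $\mathbf a$ therefore factors as
$$
(p-1)^{q-1-s}\prod_{\nu\in I}\sum_{x=1}^{p-1}\bigl(p\,\zeta_q^{mx}-c_{q,m}(h)\bigr)=(p-1)^{q-1-s}\bigl(p\,c_{q,m}(h)-(p-1)c_{q,m}(h)\bigr)^s=(p-1)^{q-1-s}c_{q,m}(h)^s.
$$
Multiplying by $c_{q,m}(h)^{q-s}$ gives \eqref{fixed-support-sum}.

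The main obstacle is this reparametrization. It fails exactly when $s=q$, where the constraint $x_1\cdots x_q=1$ couples all the variables and Deligne's bound (Lemma \ref{Deligne-q}) is needed instead. For $s\le q-1$ I expect no other difficulty beyond bookkeeping of the interchange of finite sums.
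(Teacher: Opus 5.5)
Your proof is correct, but it runs the two summations in the opposite order from the paper.

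The paper fixes $\mathbf j$ with $\operatorname{supp}(\mathbf j)=I$ and evaluates the degenerate hyper-Kloosterman sum exactly: $K_q(\mathbf j;p)=(-1)^s(p-1)^{q-1-s}$. It does this in two cases. If $q\notin I$, there is no inverse term. If $q\in I$, it sums over some $a_t$ with $t\notin I$ through the inverse to produce a factor $-1$. Since this value depends only on $s$, the $\mathbf j$-sum factors as $(-1)^s(p-1)^{q-1-s}c_{q,m}(h)^{q-s}T_{q,m}^s$, and Lemma~\ref{Tqm-exact} finishes.

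You instead fix $\mathbf a$ and sum over the nonzero $j_\nu$ first. This gives a twisted form of Lemma~\ref{Tqm-exact}, namely $\sum_{j\ne0}S_{q,m}(j)\psi(jx)=p\,\zeta_q^{mx}-c_{q,m}(h)$, which is just \eqref{zeta-ma} with the $j=0$ term removed. A single change of variables that drops a coordinate $\nu_0\notin I$ then decouples the $x_\nu$.

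Your route never needs the values of $K_q$ at degenerate arguments, and it treats $q\in I$ and $q\notin I$ uniformly. It also shows why the answer is symmetric: each coordinate in $I$ contributes $\sum_{x}\bigl(p\,\zeta_q^{mx}-c_{q,m}(h)\bigr)=c_{q,m}(h)$, just as each coordinate outside $I$ contributes $S_{q,m}(0)=c_{q,m}(h)$.

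The paper's route gives an explicit pointwise formula for $K_q$ on the degenerate locus, and it reuses the untwisted Lemma~\ref{Tqm-exact} that was proved for this purpose.

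Both arguments rest on the same mechanism. A vanishing coordinate lets one free variable absorb the multiplicative constraint $x_1\cdots x_q=1$, and this is exactly what fails when $s=q$.
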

\begin{proof}
Suppose first that $q\notin I$. In this case the inverse term in
\eqref{hyper-Kloosterman-q} does not occur. Hence,
$$
K_q(j_1,\ldots,j_q;p)
=(p-1)^{q-1-s}
\prod_{r\in I}\sum_{a_r\in(\Z/p\Z)^{\times}}\psi(j_ra_r).
$$
The orthogonality relation \eqref{q-delta-symbol} for $r=0$ gives
\begin{equation}\label{eq-b}
\sum_{a\in(\Z/p\Z)^{\times}}\psi(ja)=\begin{cases}
-1 & \text{if $j\in\operatorname{supp}(\mathbf j),$}\\
(p-1) & \text{otherwise.}
\end{cases}
\end{equation}
Applying \eqref{eq-b} we get
\begin{align}
K_q(j_1,\ldots,j_q;p)
% &=(p-1)^{q-1-s}
% \prod_{r\in I}\sum_{a_r\in(\Z/p\Z)^{\times}}\psi(j_ra_r)\nonumber\\
&=(-1)^s(p-1)^{q-1-s}.\label{degenerate-K-first-case}
\end{align}

Suppose now that $q\in I$. Since $s\le q-1$, there exists an index
$t\in\{1,\ldots,q-1\}\setminus I$. We fix all the variables other than
$a_t$ and put
$$
b:={\prod_{\substack{1\le r\le q-1\\r\ne t}}\overline{a_r}}.
$$
Then $b\ne0$, and the part of the character sum involving $a_t$ is
$
\sum_{a_t\in(\Z/p\Z)^{\times}}
\psi\left(j_qb\overline{a_t}\right)=-1,
$
because the map $a_t\mapsto\overline{a_t}$ permutes
$(\Z/p\Z)^{\times}$ and $j_qb\ne0$. After summing over $a_t$, the inverse
term no longer occurs. The $s$ non-zero linear coordinates contribute
$-1$ each, while the remaining $q-1-s$ variables are free. Consequently,
\begin{equation}\label{degenerate-K-second-case}
K_q(j_1,\ldots,j_q;p)
=(-1)^s(p-1)^{q-1-s}.
\end{equation}
Thus, \eqref{degenerate-K-first-case} and
\eqref{degenerate-K-second-case} give the same value in both cases.

For a fixed set $I$, the $q-s$ zero coordinates contribute
$S_{q,m}(0)^{q-s}=c_{q,m}(h)^{q-s}$. On the other hand, summing each of
the $s$ non-zero coordinates gives $T_{q,m}$. Therefore, using Lemma \ref{Tqm-exact}, we get
\begin{align*}
\sum_{\substack{j_1,\ldots,j_q\in\Z/p\Z\\
\operatorname{supp}(\mathbf j)=I}}
E_{p,q}^{(m)}(j_1,\ldots,j_q)
&=(-1)^s(p-1)^{q-1-s}
c_{q,m}(h)^{q-s}T_{q,m}^s\\
&=(p-1)^{q-1-s}c_{q,m}(h)^q.
\end{align*}
\end{proof} 

Summing 
over all the possible
supports gives the following exact contribution from the tuples having at
least one zero coordinate. Hence we obtain the following corollary.

\begin{corollary}\label{zero-frequency-corollary}
We have
\begin{align*}
&\frac{1}{qp^q}\sum_{m=1}^{q-1}
\sum_{\substack{j_1,\ldots,j_q\in\Z/p\Z\\
j_1j_2\cdots j_q=0}}
E_{p,q}^{(m)}(j_1,\ldots,j_q)=
\frac{1+p+\cdots+p^{q-1}}{qp^q}
\mathcal C_q(h).
\end{align*}
\end{corollary}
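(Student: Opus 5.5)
The plan is to partition the set of tuples $\mathbf j=(j_1,\ldots,j_q)\in(\Z/p\Z)^q$ with $j_1\cdots j_q=0$ according to their support $I=\operatorname{supp}(\mathbf j)$. Such tuples are exactly those whose support is a proper subset of $\{1,\ldots,q\}$, i.e.\ $|I|=s$ with $0\le s\le q-1$. For each fixed $m$ and each such $I$, Proposition \ref{fixed-support-proposition} gives the exact value of the inner sum, namely $(p-1)^{q-1-s}c_{q,m}(h)^q$. This value depends on $I$ only through $s$, and there are $\binom{q}{s}$ subsets of size $s$. Hence, for each $m$,
\begin{equation*}
\sum_{\substack{\mathbf j\in(\Z/p\Z)^q\\ j_1\cdots j_q=0}}E_{p,q}^{(m)}(\mathbf j)
= c_{q,m}(h)^q\sum_{s=0}^{q-1}\binom{q}{s}(p-1)^{q-1-s}.
\end{equation*}

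Next I evaluate the binomial sum in closed form. With $x=p-1$,
$$\sum_{s=0}^{q-1}\binom{q}{s}x^{q-1-s}=\frac{1}{x}\Bigl(\sum_{s=0}^{q}\binom{q}{s}x^{q-s}-1\Bigr)=\frac{(x+1)^q-1}{x}=\frac{p^q-1}{p-1}=1+p+\cdots+p^{q-1}.$$
Summing over $1\le m\le q-1$ then produces the factor $\sum_m c_{q,m}(h)^q=\mathcal C_q(h)$. Dividing by $qp^q$ gives the stated identity.

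There is no real obstacle here. The only points needing care are two bookkeeping checks. First, the case $s=0$, where all $j_\nu$ vanish, must be covered by the proposition; it is, since $0\le s$ is allowed, and it gives $(p-1)^{q-1}c_{q,m}(h)^q$, consistent with $K_q(0,\ldots,0;p)=(p-1)^{q-1}$. Second, the supports must genuinely partition the zero-product set, which is immediate. If $h=1$, both sides vanish, consistent with Remark \ref{zero-terms-h1}.
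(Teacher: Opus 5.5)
Your proposal is correct and follows the paper's proof essentially verbatim: it partitions the zero-product tuples by support, applies Proposition \ref{fixed-support-proposition} with $\binom{q}{s}$ supports of each size, collapses $\sum_{s=0}^{q-1}\binom{q}{s}(p-1)^{q-1-s}$ to $1+p+\cdots+p^{q-1}$, and sums over $m$ to obtain $\mathcal C_q(h)$. Your derivation of the binomial identity via $\bigl((x+1)^q-1\bigr)/x$ with $x=p-1$ supplies the one step the paper states without justification.
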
 \begin{proof}
For a fixed $s$, there are $\binom{q}{s}$ possible supports of cardinality
$s$. Therefore, Proposition \ref{fixed-support-proposition} gives
\begin{align*}
\sum_{\substack{j_1,\ldots,j_q\in\Z/p\Z\\
j_1j_2\cdots j_q=0}}
E_{p,q}^{(m)}(j_1,\ldots,j_q)&=c_{q,m}(h)^q
\sum_{s=0}^{q-1}\binom{q}{s}(p-1)^{q-1-s}\\
&=(1+p+\cdots+p^{q-1})c_{q,m}(h)^q.
\end{align*}Finally, summing over $m=1,\ldots,q-1$ completes the proof.
\end{proof}

Using Corollary \ref{zero-frequency-corollary}, we rewrite \eqref{a} as
\begin{align*}
&\mathcal M_{p,q}-\frac{q-1}{q}(p-1)^{q-1} +\frac{1+p+\cdots+p^{q-1}}{q\,p^q}
\sum_{m=1}^{q-1}c_{q,m}(h)^q\\
&= -\frac{1}{q\, p^q}\sum_{\substack{j_1,\ldots,j_q\in \Z/p\Z \\ j_1\cdots j_q\neq0}} \ \sum_{m=1}^{q-1}E_{p,q}^{(m)}(j_1,\ldots,j_q).
\end{align*}
\subsection{All of \texorpdfstring{$j_1,\ldots,j_q$}{j1,...,jq} are non-zero}
We now discuss the contributions from the tuples $(j_1,\ldots,j_q)\in((\Z/p\Z)^{\times})^q$. Applying Lemma \ref{Deligne-q}, for a fixed $m\in\{1,\ldots,q-1\}$ we have
\begin{equation}\label{nonzero-before-A}
\begin{aligned}
\frac{1}{qp^q}\sum_{j_1,\ldots,j_q\in(\Z/p\Z)^{\times}}
\left|E_{p,q}^{(m)}(j_1,\ldots,j_q)\right|&\leq
\frac{q\,p^{(q-1)/2}}{ q p^q}
\sum_{j_1,\ldots,j_q\in(\Z/p\Z)^{\times}}
\prod_{\nu=1}^{q}\left|S_{q,m}(j_\nu)\right|\\
&=\frac{p^{(q-1)/2}}{p^q}
\prod_{\nu=1}^{q}
\left(\sum_{j_\nu\in(\Z/p\Z)^{\times}}
\left|S_{q,m}(j_\nu)\right|
\right)=\frac{p^{(q-1)/2}}{p^q}A_{q,m}^{q},
\end{aligned}
\end{equation}
Hence, it remains to estimate the sum $\sum_{m=1}^{q-1}A_{q,m}^{q}$. From Lemma \ref{Aqm-bound}, we get 
$$
\sum_{m=1}^{q-1}A_{q,m}^{q}\le (q-1)p^q(3+\log p)^q.
$$
This completes the proof of Theorem \ref{thm:main thm q}.
The proof of corollary \ref{cor:q=3} follows from Theorem \ref{thm:main thm q} along with \eqref{A_{3,m}}.

\section{Concluding Remarks}

For any odd prime $q$, we have defined a notion of generalised Lehmer $q$-tuples modulo $p$ for primes $p>q$ and derived an asymptotic formula with an explicit constant for the number of generalised Lehmer $q$-tuples modulo $p$ for primes $p>q$. In the special case $q=3$, we have proved a more refined result. We mention that
Cohen and Trudgian~\cite{CoTr19} considered the following problem: are there Lehmer primitive roots $a$ and $b$ whose sum is $1\pmod{p}$? If $p$ is sufficiently large, they showed that the answer is in the affirmative. Using the results of the present paper, it may be worthwhile to study the analgous problem for Lehmer $q$-tuples.

\section{Acknowledgment}
The first author acknowledges the support of the Department of Mathematics and Statistics, IIT Tirupati for the wonderful working environment.

\section{Declaration}
There is no competing interests and no conflict of interests to declare.


\begin{thebibliography}{100}

\bibitem{B}
J. Bourgain, T. Cochrane, J. Paulhus and C. Pinner, On the parity of $k$-th powers modulo $p$. A generalization of a problem of Lehmer, Acta Arith. {\bf 147} (2011), no.~2, 173--203.

% \bibitem{C}
% S. D. Cohen, Consecutive non-square non-primitive pairs in a finite field, \href{https://arxiv.org/abs/2604.21429}{https://arxiv.org/abs/2604.21429}.

\bibitem{CoTr19}
S. D. Cohen and T. Trudgian, Lehmer numbers and primitive roots modulo a prime, J. Number Theory
{\bf 203} (2019), 68–-79.

\bibitem{G}
R.~K. Guy, {\it Unsolved problems in number theory}, Problem Books in Mathematics Unsolved Problems in Intuitive Mathematics, 1, Springer, New York-Berlin, 1981.

\bibitem{KMS}
E. Kowalski, P. G. Michel and W. F. Sawin, Bilinear forms with Kloosterman sums and applications, Ann. of Math. (2) {\bf 186} (2017), no. 2, 413--500.

% \bibitem{WL}
% W. Luo, Bounds for incomplete hyper-Kloosterman sums, J. Number Theory {\bf 75} (1999), no. 1, 41--46.


\bibitem{LY}
Y.~M. Lu and Y. Yi, On the generalization of the D. H. Lehmer problem, Acta Math. Sin. (Engl. Ser.) {\bf 25} (2009), no.~8, 1269--1274.

\bibitem{LY-2}
Y.~M. Lu and Y. Yi, On the generalization of the D. H. Lehmer problem. II, Acta Arith. {\bf 142} (2010), no.~2, 179--186.

\bibitem{R}
B. Roy, On the existence of certain Lehmer numbers modulo a prime, Expo. Math. {\bf 42} (2024), 21 pp.


\bibitem{WaXu25}
J. Wang and Z.~F. Xu, Partitions into two Lehmer numbers in $\Bbb Z_q$, Math. Slovaca {\bf 75} (2025), no.~1, 45--54.

\bibitem{WLY}
W.~L. Yao, On the generalization of the D. H. Lehmer problem and its mean value, JP J. Algebra Number Theory Appl. {\bf 6} (2006), no.~3, 479--491.

\bibitem{YZ}
Y. Yi and W. Zhang, On generalization of Lehmer D H problem, Gongcheng Shuxue Xuebao {\bf 20} (2003), no.~1, 60--64.

\bibitem{WZ-3}
W. Zhang, A problem of D. H. Lehmer and a generalization of it, J. Northwest Univ. {\bf 23} (1993), no.~2, 103--108.

\bibitem{WZ}
W. Zhang, A problem of D. H. Lehmer and its generalization, Compos. Math. {\bf 91} (1994) 47–-51.

\bibitem{WZ-2} 
W. Zhang, A problem of D. H. Lehmer and its generalization. II, Compositio Math. {\bf 91} (1994), no. 1, 47--56.
	
\end{thebibliography}
\end{document}